\documentclass[reqno,11pt]{article}
\usepackage{amsfonts,amsmath,amsthm,amssymb,amscd,url}
\usepackage[all]{xy}
\sloppy

\addtolength{\oddsidemargin}{-.75in}%
\addtolength{\evensidemargin}{-.75in}%
\addtolength{\textwidth}{1.5in}%
\addtolength{\textheight}{1.3in}%
\addtolength{\topmargin}{-.8in}%

\theoremstyle{plain}
\newtheorem{theorem}{Theorem}
\newtheorem{proposition}[theorem]{Proposition}

\newtheorem{lemma}[theorem]{Lemma}

\theoremstyle{definition}

\theoremstyle{remark}
\newtheorem{remark}[theorem]{\upshape Remark}
\newcommand{\blue}[1]{#1}
\usepackage{color,url}

\newcommand{\R}{\mathbb R}
\newcommand{\C}{\mathbb C}

\newcommand{\Z}{\mathbb Z}
\newcommand{\zz}[2]{\frac{\partial^{#2}}{\partial z_{#1}^{#2}}}
\newcommand{\z}[1]{\frac{\partial}{\partial z_{#1}}}
\newcommand{\bz}{\overline{z}}
\newcommand{\bzz}[1]{\frac{\partial}{\partial \bz_{#1}}}
\newcommand{\D}{\mathcal{D}}
\newcommand{\I}{\mathcal{I}}
\newcommand{\M}{\mathcal{M}}
\newcommand{\B}{\mathcal{B}}
\newcommand{\h}{\mathfrak{h}}
\newcommand{\gl}{\mathfrak{gl}}

\newcommand{\End}{\operatorname{End}}
\newcommand{\Hom}{\operatorname{Hom}}

\title{A D-module approach to invariant distributions\\ 
with finitely many orbits}
\author{Hiroyuki Ochiai\thanks{Keywords: D-module, holonomic system, invariant distribution, orbit decomposition, Lie algebra. 
This work is partially supported by JSPS Grant-in-Aid for Transformative Research Areas (A) (22H05107).
}}
\begin{document}
\maketitle
\begin{abstract}
\blue{
Tauchi provides an example illustrating the action of a real algebraic subgroup $H$ of $GL(2n, \R)$ with finitely many orbits on $\R^{2n}$, while the dimension of the space of relative $H$-invariant distributions on $\R^{2n}$ is infinite. We offer a perspective on this example from the viewpoint of D-modules, where we explicitly determine the simple quotient regular holonomic D-modules and demonstrate that the distributions exhibit an enlarged symmetry.}
\end{abstract}


\section{Introduction}
In 2018, Tauchi \cite{T1} discovered a representation
of a Lie group with finitely many orbits, where 
the dimension of relatively invariant distributions with
a fixed character is infinite.
This intriguing representation is motivated by providing an example demonstrating 
that the smooth regular representation on some homogeneous space of $SL(2n,\R)$ with finitely many orbits under a real mirabolic subgroup has infinite multiplicity of a degenerate principal series representation \cite{T2}, \cite{T3}.
Distributions of interest are characterized by being annihilated by vector fields corresponding to a Lie algebra. In this paper, we explore the left ideal generated by these vector fields in the ring of differential operators and the corresponding D-modules. This D-module must be non-holonomic due to the infinite dimension of its solutions. We identify countably many quotient D-modules, each turning out to be a simple holonomic D-module. Subsequently, we revisit the Lie algebraic interpretation of these quotient D-modules. It's important to note that the group used here is solvable (upper triangular), potentially resulting in the corresponding D-module having irregular singularities. Nevertheless, these simple holonomic D-modules must exhibit regular singularities.

The interplay between Lie algebra and D-modules for relatively invariant distributions was leveraged by the author \cite{O1} to gain insights into the seminal work of van Dijk. In \cite{vD84}, among other contributions, he highlights the non-existence of a spherical distribution — an invariant eigen-distribution of the pseudo-Laplacian — on the tangent space of a rank-one semisimple symmetric space $SL(m+1, \mathbb{R})/GL(m,\mathbb{R})$ supported on the singular nilpotent orbits. This marked the beginning of harmonic analysis on this semisimple symmetric space \cite{vDP86, vdP90, vd08, vd09} and the exploration of a generalized Gelfand pair \cite{vd842, vD86, vdBook}.
Kowata \cite{Ko2} establishes the non-existence of singular invariant distributions and hyperfunctions \cite{Ko1} without resorting to the pseudo-Laplacian, subject to certain conditions. For instance, when considering the tangent space of $SL(m+1,\mathbb{R})/GL(m,\mathbb{R})$, Kowata identifies a larger symmetry, $SO_0(m,m)$, compared to $GL(m,\mathbb{R})$. The author \cite{O1} rearticulates this finding in terms of D-modules. Notably, in this tangent space setting, as well as the global setting \cite{O2}, two Lie algebras, $\mathfrak{gl}(m,\mathbb{R})$ and $\mathfrak{so}(m,m)$, exhibit the same D-modules. As a corollary, we deduce an automatic extension of the symmetry of spherical distribution and the non-existence of singular spherical distribution. In other words, there is no contribution of singular orbits under the smaller Lie group.

Tauchi's example focuses primarily on real projective spaces $\mathbb{P}^3$ and $\mathbb{P}^5$ of dimensions three and five, respectively. These spaces correspond to the real vector spaces $\mathbb{R}^4 \setminus \{0\}$ and $\mathbb{R}^6 \setminus \{0\}$ with the origin removed and divided by $\mathbb{R}^{\times}$. In our analysis, we operate on vector spaces $\mathbb{R}^{2n} = \mathbb{C}^n$ with $n=2,3$ without dividing by $\mathbb{R}^{\times}$, and we observe that including the origin poses no issues.

We introduce various real Lie subgroups of $GL(2n, \mathbb{R})$ and their Lie algebras, including their complexifications. Further, we delve into the orbit decomposition of $\mathbb{R}^{2n}$ and $\mathbb{C}^{2n}$ for each case.

In the context of the ring of differential operators with polynomial coefficients, denoted as 
\[\mathcal{D} = \mathbb{C}[z_1, \ldots, z_{2n},\blue{\z1, \ldots, \z{2n}]},\] we define the left ideal $\mathcal{I}$ of $\mathcal{D}$ and the corresponding D-module $\mathcal{D}/\mathcal{I}$, along with its hyperfunction or distribution solutions. The reference for D-modules will be \cite{HTT, K1}.

\section{$n=2$}
\subsection{Lie algebra}

\blue{We} define a solvable Lie subalgebra of $\gl(4,\C)$ by
\[
\h_1 :=
\mathfrak{a} \oplus \mathfrak{n}
:=
( \C E_{11} \oplus \C E_{22} \oplus \C E_{33} \oplus \C E_{44} )
\oplus ( \C E_{14} 
\oplus \C E_{24} \oplus \C E_{31} \oplus \C E_{32} \oplus \C E_{34} ).
\]
The Lie algebra $\h_1$ is illustrated as
\[
\begin{pmatrix}
* & 0 & 0 & * \\
0 & * & 0 & * \\
* & * & * & * \\
0 & 0 & 0 & * \\
\end{pmatrix}
\begin{pmatrix}
z_1\\
z_2\\
z_3\\
z_4
\end{pmatrix}
,
\qquad
\begin{pmatrix}
* & * & * & * \\
0 & * & 0 & * \\
0 & 0 & * & * \\
0 & 0 & 0 & *
\end{pmatrix}
\begin{pmatrix}
z_3\\
z_2\\
z_1\\
z_4
\end{pmatrix},
\]
where the figure on the left is represented in standard coordinates, whereas the figure on the right illustrates the upper-triangular realization of the Lie algebra. 
Put 
$c = \begin{pmatrix}
0 & 0 & 1 & 0 \\
0 & 1 & 0 & 0 \\
1 & 0 & 0 & 0 \\
0 & 0 & 0 & 1 \end{pmatrix}$
and a Lie algebra isomorphism
$\iota : \gl(4) \rightarrow \gl(4)$ by $\iota(A) = c A c^{-1}$.

We now consider the several \blue{subalgebras}
$\mathfrak{h}_3 \subset \mathfrak{h}_2 \subset \mathfrak{h}_1$ 
defined by
\begin{align*}
\mathfrak{h}_2 &:=
\mathfrak{a} \oplus ( \C E_{14} \oplus \C E_{32} ).
\\
\mathfrak{h}_3 &:= ( \C(E_{11}+E_{22}) \oplus \C(E_{33}+E_{44}))
\oplus ( \C E_{14} \oplus \C E_{32} ).
\end{align*}
\[
\h_1=\begin{pmatrix}
* & 0 & 0 & * \\
0 & * & 0 & * \\
* & * & * & * \\
0 & 0 & 0 & * \\
\end{pmatrix},
\quad
\h_2=\begin{pmatrix}
* & 0 & 0 & * \\
0 & * & 0 & 0 \\
0 & * & * & 0 \\
0 & 0 & 0 & * \\
\end{pmatrix},
\quad
\h_3=
\begin{pmatrix}
a_{11} & 0 & 0 & a_{14} \\
0 & a_{11} & 0 & 0 \\
0 & a_{32} & a_{44} & 0 \\
0 & 0 & 0 & a_{44} \\
\end{pmatrix}.
\]
\[
\iota(\h_1)=\begin{pmatrix}
* & * & * & * \\
0 & * & 0 & * \\
0 & 0 & * & * \\
0 & 0 & 0 & *
\end{pmatrix},
\quad
\iota(\h_2)=\begin{pmatrix}
* & * & 0 & 0 \\
0 & * & 0 & 0 \\
0 & 0 & * & * \\
0 & 0 & 0 & * \\
\end{pmatrix},
\quad
\iota(\h_3)=
\begin{pmatrix}
a_{44} & a_{32} & 0 & 0 \\
0 & a_{11} & 0 & 0 \\
0 & 0 & a_{11} & a_{14} \\
0 & 0 & 0 & a_{44}
\end{pmatrix}.
\]

\subsection{Real form}

Let $X:=\C^4$ be a complex four-dimensional vector space
with the standard coordinates $\mathbf{z}=(z_1, z_2, z_3, z_4 )^T$.
Let
$J:=\begin{pmatrix}
0 & 0 & 1 & 0 \\
0 & 0 & 0 & 1 \\
1 & 0 & 0 & 0 \\
0 & 1 & 0 & 0 
\end{pmatrix}$.
We define a real four-dimensional vector space
\[
X_\R:=\{ \mathbf{z} 
= (z_1, z_2, z_3, z_4 )^T \in \C^4 \mid \mathbf{z} = J \overline{\mathbf{z}} \}
= \{(z_1,z_2, \overline{z_1}, \overline{z_2})\blue{^T}\mid z_1,z_2 \in \C \}.
\]
Then $X_\R$ is a real form of the complex vector space $X=\C^4$.
The corresponding real form of the complex Lie algebra $\mathfrak{gl}(4,\C) = \blue{\End}_\C(X)$ 
is 
$\mathfrak{g}_\R:=\{ X \in \mathfrak{gl}(4,\C) \mid J \overline{X} J = X \}$.

We define the real Lie algebra $\h_{i\R} := \h_i \cap \mathfrak{g}_\R$
for $i=\blue{1},2,3$.
\begin{lemma}
\begin{itemize}
\item
$\mathfrak{g}_\R=\left\{ \begin{pmatrix}
a_{11} & a_{12} & a_{13} & a_{14} \\
a_{21} & a_{22} & a_{23} & a_{24} \\
\overline{a_{13}} & \overline{a_{14}} & \overline{a_{11}} & \overline{a_{12}} \\
\overline{a_{23}} & \overline{a_{24}} & \overline{a_{21}} & \overline{a_{22}} 
\end{pmatrix} \right\} $.
\item
$\h_{2\R}
=\left\{\begin{pmatrix}
a_{11} & 0 & 0 & a_{14} \\
0 & a_{22} & 0 & 0 \\
0 & \overline{a_{14}} & \overline{a_{11}} & 0 \\
0 & 0 & 0 & \overline{a_{22}} 
\end{pmatrix}\right\}
=
\left\{\begin{pmatrix}
a_{11} & 0 & 0 & a_{14} \\
0 & \overline{a_{44}} & 0 & 0 \\
0 & \overline{a_{14}} & \overline{a_{11}} & 0 \\
0 & 0 & 0 & a_{44}
\end{pmatrix}\right\}
$, which is a real form of $\h_2$.
\item
$\h_{1\R}=\h_{2\R}$,
\blue{which is not a real form of $\h_1$.}

\item
$\blue{\h_{3\R}}
= \left\{\begin{pmatrix}
a_{11} & 0 & 0 & a_{14} \\
0 & a_{11} & 0 & 0 \\
0 & \overline{a_{14}} & \overline{a_{11}} & 0 \\
0 & 0 & 0 & \overline{a_{11}}
\end{pmatrix}\right\}$,
which is \blue{a} real form of $\h_3$.
\end{itemize}
\end{lemma}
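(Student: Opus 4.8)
The plan is to encode the reality condition through the conjugation $\sigma(X):=J\overline{X}J$ on $\gl(4,\C)$. Using $J^2=I$ and that $J$ is real, one checks immediately that $\sigma$ is a conjugate-linear involutive Lie algebra automorphism, and by definition $\mathfrak{g}_\R$ is exactly its fixed-point set $\gl(4,\C)^\sigma$. Writing $J=\begin{pmatrix}0&I_2\\ I_2&0\end{pmatrix}$ in $2\times 2$ blocks and $X=\begin{pmatrix}A&B\\ C&D\end{pmatrix}$, a one-line block multiplication gives $\sigma(X)=\begin{pmatrix}\overline D&\overline C\\ \overline B&\overline A\end{pmatrix}$, so $\sigma(X)=X$ means $D=\overline A$ and $C=\overline B$ with $A,B$ free; this is precisely the displayed form of $\mathfrak{g}_\R$. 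At the level of individual entries this reads $(\sigma X)_{ij}=\overline{X_{\tau(i),\tau(j)}}$ with $\tau=(13)(24)$, and I will use this entrywise description throughout.

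For $\h_2$ and $\h_3$ the strategy is to show that $\sigma$ stabilizes the complex subspace and then invoke the general fact that for a $\sigma$-stable complex subspace $V\subseteq\gl(4,\C)$ the fixed set $V^\sigma=V\cap\mathfrak{g}_\R$ is a real form with $\dim_\R V^\sigma=\dim_\C V$. The support of $\h_2$ is $\{(1,1),(2,2),(3,3),(4,4),(1,4),(3,2)\}$, and $\tau$ acting on row and column indices permutes it to itself ($(1,1)\leftrightarrow(3,3)$, $(2,2)\leftrightarrow(4,4)$, $(1,4)\leftrightarrow(3,2)$), so $\sigma(\h_2)=\h_2$; the same check, now also respecting the identifications $a_{11}=a_{22}$ and $a_{33}=a_{44}$ built into $\h_3$, gives $\sigma(\h_3)=\h_3$. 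Solving $\sigma(X)=X$ on each space — that is, imposing $a_{33}=\overline{a_{11}}$, $a_{44}=\overline{a_{22}}$, $a_{32}=\overline{a_{14}}$, and in the $\h_3$ case additionally $a_{22}=a_{11}$ — produces the two displayed matrices and simultaneously certifies that $\h_{2\R}$ and $\h_{3\R}$ are real forms.

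The subtle case is $\h_1$, where $\sigma$ is not stable, so I compute $\h_{1\R}=\h_1\cap\mathfrak{g}_\R$ by hand. Imposing the $\mathfrak{g}_\R$-pattern on a general element of $\h_1$, the decisive point is that $\tau$ pairs the support positions $(2,4),(3,1),(3,4)$ with $(4,2),(1,3),(1,2)$ respectively, none of which lie in the support of $\h_1$; hence the generators $E_{24},E_{31},E_{34}$ are forced to vanish and the space collapses onto the same three free parameters $a_{11},a_{14},a_{22}$ that define $\h_{2\R}$. This yields $\h_{1\R}=\h_{2\R}$. The final clause follows from a dimension count: $\dim_\R\h_{1\R}=6<9=\dim_\C\h_1$, so the complexification of $\h_{1\R}$ cannot fill out $\h_1$ and $\h_{1\R}$ is not a real form of $\h_1$. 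The only genuine obstacle is bookkeeping — keeping the involution $\tau=(13)(24)$ on indices straight so that exactly the three extra nilpotent generators of $\h_1$ are seen to drop out; everything else is routine linear algebra.
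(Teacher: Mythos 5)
Your proof is correct. The paper states this lemma without any proof (it is a routine verification), and your argument --- realizing $\mathfrak{g}_\R$ as the fixed-point set of the conjugate-linear involution $\sigma(X)=J\overline{X}J$, checking $\sigma$-stability of $\h_2$ and $\h_3$ via the index involution $\tau=(13)(24)$ to get genuine real forms, computing $\h_1\cap\mathfrak{g}_\R$ directly to see the generators $E_{24},E_{31},E_{34}$ drop out, and using the dimension count $6<9$ for the final clause --- supplies exactly the computation the paper leaves implicit.
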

\blue{The image of each real Lie algebra by $\iota$ is given as}
\begin{align*}
\iota(\mathfrak{g}_\R)
&=\left\{\begin{pmatrix}
\overline{a_{11}} & \overline{a_{14}} & \overline{a_{13}} & \overline{a_{12}} \\
a_{23} & a_{22} & a_{21} & a_{24} \\
a_{13} & a_{12} & a_{11} & a_{14} \\
\overline{a_{21}} & \overline{a_{24}} & \overline{a_{23}} & \overline{a_{22}} 
\end{pmatrix}\right\},\\
\iota(\h_{2\R})
&= \left\{\begin{pmatrix}
\overline{a_{11}} & \overline{a_{14}} & 0 & 0 \\
0 & \overline{a_{44}} & 0 & 0 \\
0 & 0 & a_{11} & a_{14} \\
0 & 0 & 0 & a_{44}
\end{pmatrix}\right\},
\quad
\iota(\h_{3\R})
=\left\{\begin{pmatrix}
\overline{a_{11}} & \overline{a_{14}} & 0 & 0 \\
0 & a_{11} & 0 & 0 \\
0 & 0 & a_{11} & a_{14} \\
0 & 0 & 0 & \overline{a_{11}}
\end{pmatrix}\right\}\blue{.}
\end{align*}
The dimension of each Lie algebra is
\[
\dim_\C \h_1 = 9,\ 
\dim_\C \h_2 = 6,\ 
\dim_\C \h_3 = 4,\ 
\blue{\dim_\R \h_{1\R}=} \dim_\R \h_{2\R}=6,\ 
\dim_\R \h_{3\R}=4.
\]

\subsection{Orbit decomposition}
For each Lie algebra or its corresponding Lie group,
we consider the orbit decomposition by 
the action of $GL(4,\C) \supset H_1 \supset H_2 \supset H_3$ on $X=\C^4$
and the action of $G_\R\supset H_{1\R}= H_{2\R} \supset H_{3\R}$ on $X_\R$.
The upper triangular realization $\iota(\h_i)$ is preferred over the standard realization $\h_i$ to facilitate the observation of the orbit decomposition.

\subsubsection{\blue{C}omplex}
We define Zariski locally closed subsets $O_2,O_1,O_0$ of $\C^2=\{(z_3,z_2)^\blue{T} \in \C^2\}$
and $O'_2, O'_1,O'_0$ of $\C^2=\{(z_1,z_4)^\blue{T} \in \C^2\}$ by
\begin{align*}
O_2 &:=\{\blue{(z_3,z_2)^\blue{T} \in \C^2 \mid} z_2 \neq 0\},\\
O_1&:=\{\blue{(z_3,z_2)^\blue{T} \in \C^2 \mid} z_2=0, z_3 \neq 0\},\\
O_0 &:=\{\blue{(z_3,z_2)^\blue{T} \in \C^2 \mid} z_2=z_3=0 \},
\\
O'_2 &:=\{\blue{(z_1,z_4)^\blue{T} \in \C^2 \mid} z_4 \neq 0\},\\
O'_1&:=\{\blue{(z_1,z_4)^\blue{T} \in \C^2 \mid} z_4=0, z_1 \neq 0\},\\
O'_0 &:=\{\blue{(z_1,z_4)^\blue{T} \in \C^2 \mid} z_4=z_1=0 \}.
\end{align*}
Here, the index denotes the dimension of each orbit.
We define $O_{ij} = O_i \times O'_j \subset \blue{\C^2\times \C^2 \subset} \C^4$
for $i,j=0,1,2$.
\blue{Then \[\displaystyle \C^2 = \bigsqcup_{i=0,1,2} O_i,
\C^2 = \bigsqcup_{j=0,1,2} O'_j,
\C^4 = \bigsqcup_{i,j=0,1,2} O_{ij}.\]}
For $c \in \C^\times$,
we define (Tauchi)
\begin{align*}
O_{12c} &:= \{ (z_3,z_2,z_1,z_4) \in O_{12} \mid z_3 = c z_4 \},
\\
O_{21c} &:= \{ (z_3,z_2,z_1,z_4) \in O_{21} \mid z_1 = c z_2 \}.
\end{align*}
Note that 
$\displaystyle\blue{\bigsqcup_{c \in \C^\times}} O_{12c} = O_{12}$,
and
$\displaystyle\blue{\bigsqcup_{c \in \C^\times}} O_{21c} = O_{21}$.
\begin{lemma}
\begin{itemize}
\item $O_{ij}$ are $\blue{\iota(}H_2)$-orbits.
\item $O_{ij}$ remain $\blue{\iota(}H_3)$-orbits unless $(i,j)=(1,2),(2,1)$.
\item $O_{12c}$ and $O_{21c}$ are $\blue{\iota(}H_3)$-orbits \blue{ for $c \in \C^\times$}.
\item $O_{22}\cup O_{\blue{12}} \cup O_{\blue{02}}$,
$O_{\blue{21}}$, $O_{\blue{20}}$, $O_{11} \cup \blue{O_{01}}$, $\blue{O_{10}}$, $O_{00}$
are $\blue{\iota(}H_1)$-orbits.
\end{itemize}
\end{lemma}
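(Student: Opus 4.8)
The plan is to work throughout in the upper-triangular realization $\iota(\h_i)$, since these act by upper-triangular matrices on the reordered column $\mathbf{w}=(z_3,z_2,z_1,z_4)^T$ and the stratification into the $O_{ij}$ is adapted to exactly this ordering. I would first record the action of a general invertible matrix $g\in\iota(H_i)$ of the displayed shape on $\mathbf{w}$. For $\iota(H_2)$ the matrix is block diagonal with two $2\times2$ Borel blocks, one on $(z_3,z_2)^T$ and one on $(z_1,z_4)^T$, giving
\[
z_3\mapsto a z_3+b z_2,\quad z_2\mapsto d z_2,\quad z_1\mapsto a' z_1+b' z_4,\quad z_4\mapsto d' z_4,
\]
with $a,d,a',d'\in\C^\times$ and $b,b'\in\C$ free. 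Because this is a direct product action, its orbits are products of the orbits of a single $2\times2$ Borel on $\C^2$, which are exactly the three strata ``pivot coordinate $\neq0$'', ``pivot $=0$, other $\neq0$'', and the origin, i.e. the $O_i$ and $O'_j$; transitivity on each stratum is the elementary move of normalizing the pivot with the diagonal entry and then adjusting the remaining coordinate with the off-diagonal entry. This yields the first bullet, $O_{ij}=O_i\times O'_j$.

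The crux is the passage to $\iota(H_3)$, where the diagonal is constrained: the $(2,2)$ and $(3,3)$ entries are forced equal (both $a_{11}$) and the $(1,1)$ and $(4,4)$ entries are forced equal (both $a_{44}$). I would repeat the stratum-by-stratum transitivity check, the key point being to isolate exactly where the shared scalar obstructs transitivity. On $O_{21}$ (where $z_2\neq0$, $z_4=0$, $z_1\neq0$) the vanishing of $z_4$ kills the off-diagonal term acting on $z_1$, so both $z_2$ and $z_1$ are scaled by the single scalar $a_{11}$; hence $z_1/z_2$ is invariant and its level sets are precisely the $O_{21c}$. Symmetrically, on $O_{12}$ the common scalar $a_{44}$ on $z_3,z_4$ produces the invariant $z_3/z_4$ and the strata $O_{12c}$. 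For every other $(i,j)$ I would verify no such coupling survives (either a coupled coordinate vanishes, or the relevant off-diagonal entry is still active because its companion coordinate is nonzero), so the orbit is unchanged; and once the ratio is fixed, transitivity within each $O_{21c}$, $O_{12c}$ follows exactly as before. This establishes the second and third bullets together.

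For $\iota(H_1)$ the matrix is fully upper-triangular in this ordering, with independent diagonal entries $d_1,\dots,d_4$ and off-diagonal entries in positions $(1,2),(1,3),(1,4),(2,4),(3,4)$. The extra entries feeding $z_1,z_4$ into $z_3$ and $z_4$ into $z_2,z_1$ let one cross the vanishing loci whenever the source coordinate is nonzero: when $z_4\neq0$ the entries $b_{14},b_{24}$ make $z_3,z_2$ completely free, merging $O_{02},O_{12},O_{22}$ into the open orbit; when $z_4=0$ but $z_1\neq0$ the entry $b_{13}$ makes $z_3$ free, merging $O_{01}$ and $O_{11}$; in every remaining stratum the independence of the diagonal entries (in contrast with $H_3$) keeps the coordinates decoupled, so $O_{21},O_{20},O_{10},O_{00}$ each remain single orbits. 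Counting the six resulting orbits against the nine $O_{ij}$ confirms exhaustion, giving the fourth bullet.

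The only genuine obstacle is bookkeeping: on each stratum one solves an explicit triangular linear system for the group parameters, and the single conceptual step is to pin down precisely the stratum on which a coupled diagonal scalar becomes the sole scaling of two nonzero coordinates. All the content is in recognizing the invariants $z_1/z_2$ and $z_3/z_4$ that appear for $H_3$ and are destroyed by the additional off-diagonal freedom of $H_1$; the rest is routine finite linear algebra.
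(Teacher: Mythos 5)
Your proposal is correct, and in fact the paper offers no proof of this lemma at all: it is stated as a routine fact, with the $H_3$ and $H_{3\R}$ orbit decompositions attributed to Tauchi in the following remark. So there is no ``paper proof'' to compare against; your argument supplies exactly the verification the paper leaves implicit. Working in the upper-triangular realization on $(z_3,z_2,z_1,z_4)^T$, identifying $\iota(H_2)$ as a product of two $2\times 2$ Borel subgroups (whose orbits on $\C^2$ give the $O_i$ and $O_j'$), locating the invariants $z_1/z_2$ on $O_{21}$ and $z_3/z_4$ on $O_{12}$ that arise from the tied diagonal scalars of $\iota(\h_3)$, and then checking which strata merge under the extra off-diagonal entries of $\iota(\h_1)$ is precisely the computation the lemma rests on, and each of your stratum-by-stratum transitivity checks goes through.

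One small caution on wording: you call $\iota(\h_1)$ ``fully upper-triangular,'' but its $(2,3)$ entry is forced to vanish (the paper's displayed shape has a $0$ there). Your explicit list of the off-diagonal positions $(1,2),(1,3),(1,4),(2,4),(3,4)$ is correct and your argument uses only those, so nothing breaks; but the absence of the $(2,3)$ entry is essential to the final bullet. If it were present, then on $\{z_4=0,\ z_1\neq 0\}$ the coordinate $z_2$ could be moved freely and $O_{21}$ would merge with $O_{11}\cup O_{01}$, contradicting the claim that $O_{21}$ is a separate $\iota(H_1)$-orbit. It would be worth stating explicitly that when $z_4=0$ the conditions $z_2=0$ and $z_2\neq 0$ are each preserved by $\iota(H_1)$ (since $z_2\mapsto d_2 z_2 + b_{24}z_4 = d_2 z_2$), which is the stability statement that keeps $O_{21}$ and $O_{20}$ from merging with anything else.
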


\subsubsection{\blue{R}eal}
The real form $X_\R\blue{=\{(z_1,z_2,\overline{z_1},\overline{z_2}) \mid z_1, z_2 \in \C\}}$ is a subset of $O_{22} \cup O_{11} \cup O_{00}$.

\begin{lemma}
\begin{itemize}
\item
There are three $H_{3\R}$-orbits
\begin{align*}
O_{22\R}&:=\{(z_1,z_2,\blue{\overline{z_1}, \overline{z_2}}) \in X_{\R}\mid z_2 \neq 0\},\\
O_{11\R}&:=\{(z_1,z_2,\blue{\overline{z_1}, \overline{z_2}}) \in X_{\R}\mid z_2=0, z_1 \neq 0\},\\ 
O_{00\R}&:=\{(z_1,z_2,\blue{\overline{z_1}, \overline{z_2}}) \in X_{\R}\mid z_2=z_1=0 \}
\end{align*}
on $X_\R$.
\item
Each orbit under $H_{3\R}$ is also 
an orbit under $H_{2\R}$.
\item
The real manifold
$O_{ii\R}$ is a real form of the complex manifold $O_{ii}$
for $i=0,1,2$.
\end{itemize}

\end{lemma}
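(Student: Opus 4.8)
The plan is to pass from the Lie algebras $\h_{3\R}$ and $\h_{2\R}$ to the explicit action of the corresponding connected groups on the two free coordinates $(z_1,z_2)$, and then to verify transitivity on each candidate stratum by hand. First I would read off the infinitesimal action of $\h_{3\R}$ at a point $(z_1,z_2,\overline{z_1},\overline{z_2})^T\in X_\R$: the diagonal generator $a_{11}(E_{11}+E_{22})+\overline{a_{11}}(E_{33}+E_{44})$ acts infinitesimally by $z_1\mapsto a_{11}z_1$, $z_2\mapsto a_{11}z_2$, while the nilpotent generator $a_{14}E_{14}+\overline{a_{14}}E_{32}$ acts by $z_1\mapsto a_{14}\overline{z_2}$, $z_2\mapsto 0$. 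Integrating the flows, the connected group $H_{3\R}$ acts by the simultaneous scaling $(z_1,z_2)\mapsto(\lambda z_1,\lambda z_2)$ with $\lambda=e^{a_{11}}\in\C^\times$ (using that $\exp\colon\C\to\C^\times$ is surjective), composed with the unipotent shear $(z_1,z_2)\mapsto(z_1+b\overline{z_2},z_2)$, $b\in\C$. The same reading for $\h_{2\R}$ yields this shear together with two independent scalings $z_1\mapsto\lambda z_1$, $z_2\mapsto\mu z_2$, $\lambda,\mu\in\C^\times$.

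With the group action in hand, the first bullet follows by a transitivity check on each stratum. On $\{z_2\neq 0\}$ I would scale $z_2$ to a prescribed nonzero target and then solve a single linear equation for the shear parameter $b$, solvable because $\overline{z_2}\neq 0$, to hit any prescribed $z_1$; hence $O_{22\R}$ lies in one orbit. On $\{z_2=0,\ z_1\neq 0\}$ the shear acts trivially and the scaling sweeps out the whole set, and $\{0\}$ is visibly one orbit. Conversely the action preserves each of these three sets (the shear fixes $z_2$ and scaling multiplies it by a unit, so $\{z_2=0\}$ is invariant, and likewise $\{z_2=z_1=0\}$), so together with transitivity they are exactly the orbits. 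Invoking the preceding remark $X_\R\subset O_{22}\cup O_{11}\cup O_{00}$ (forced by $z_3=\overline{z_1}$, $z_4=\overline{z_2}$), I would also record the identity $O_{ii\R}=O_{ii}\cap X_\R$.

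For the second bullet, since $H_{3\R}\subset H_{2\R}$ every $H_{2\R}$-orbit is a union of $H_{3\R}$-orbits; running the identical transitivity argument for $H_{2\R}$ (the extra scaling $\mu$ on $z_2$ only eases transitivity and never crosses $\{z_2=0\}$) shows $H_{2\R}$ has the same three orbits, so the two decompositions coincide. For the third bullet I would use the anti-holomorphic involution $\sigma(z)=J\overline{z}$, whose fixed-point set is $X_\R$: a short computation gives $\sigma\colon(z_2,z_4)\mapsto(\overline{z_4},\overline{z_2})$ and $(z_1,z_3)\mapsto(\overline{z_3},\overline{z_1})$, so $\sigma$ preserves each complex stratum $O_{ii}$. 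Hence $O_{ii\R}=O_{ii}^\sigma$ is the fixed-point set of an anti-holomorphic involution on $O_{ii}$, i.e.\ a real form, and comparing dimensions $\dim_\R O_{ii\R}=\dim_\C O_{ii}$ (namely $4,2,0$ for $i=2,1,0$) confirms the claim.

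The computations are routine; the only place demanding care is the passage from the Lie algebra to the group action, in particular confirming that the diagonal exponential surjects onto $\C^\times$ and that the composite ``scale then shear'' is genuinely realized by a single element of the connected group $H_{3\R}$, so that transitivity on $O_{22\R}$ really holds. Everything else is bookkeeping with the real-structure constraints $z_3=\overline{z_1}$, $z_4=\overline{z_2}$.
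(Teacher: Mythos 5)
Your proof is correct. For comparison: the paper offers no argument at all for this lemma --- it simply records the orbit decomposition and, in the remark immediately following, attributes the groups $H_3$, $H_{3\R}$ and their orbit decompositions to Tauchi \cite{T1}. Your direct verification is the natural way to fill that gap, and the details check out: reading the infinitesimal action off the displayed matrix form of $\h_{3\R}$ (the $E_{14}$-term contributes $a_{14}\overline{z_2}$ to $\dot z_1$ precisely because $z_4=\overline{z_2}$ on $X_\R$), integrating to the simultaneous scaling by $\lambda\in\C^\times$ and the shear $(z_1,z_2)\mapsto(z_1+b\overline{z_2},z_2)$, and then checking invariance and transitivity stratum by stratum; the solvability of the shear parameter is exactly where $z_2\neq 0$ enters, and the extra independent scaling of $z_2$ available in $\h_{2\R}$ cannot move points across $\{z_2=0\}$, which gives the second bullet. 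The third bullet via the antiholomorphic involution $\sigma(\mathbf{z})=J\overline{\mathbf{z}}$, which preserves each $O_{ii}$ and has fixed-point set $O_{ii}\cap X_\R=O_{ii\R}$ of real dimension equal to $\dim_\C O_{ii}$, is likewise the standard argument. One small remark: your stated worry about whether ``scale then shear'' is realized by a single element of $H_{3\R}$ is unnecessary for the proof --- both factors already lie in the connected group, so their composite does too by the group law; the normalizer-type computation you allude to proves the stronger fact that every element of $H_{3\R}$ has this form, which transitivity does not require.
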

\begin{remark}
The groups $H_3$ and $H_{3\R}$ 
and their orbit decompositions are given by Tauchi \cite{T1}.
{\blue It is important that $H_3$ has infinitely many orbits in $\C^4$ and
$H_{3\R}$ has finitely many orbits in the real form $X_\R$.}
\end{remark}

\subsection{D-module}
In $X=\C^4$, 
we use coordinates $(z_1,z_2,z_3,z_4) \in X$,
and $(z_1,z_2,z_3,z_4,\zeta_1,\zeta_2,\zeta_3,\zeta_4) \in T^*X$
for the cotangent bundle.
The ring of differential operators on $X$ is denoted by $\D=\D_X=\C[z_1,z_2,z_3,z_4, \z{1},\z 2, \z 3,\z{4}]$.
For a complex subspace $Y:=\{z_2=z_4=0\}$, which is the closure of $O_{11}$,
we define a D-module
\[
\mathcal{B}_{Y|X}
= \mathcal{H}^2_{[Y]}(\mathcal{O}_X)
= \D/(\D \z 1 + \D z_2 + \D \z 3 + \D z_4).
\]
The characteristic variety of $\mathcal{B}_{Y|X}$ is the conormal bundle
$\{ \zeta_1= z_2= \zeta_3= z_4=0 \} = T^*_Y X \subset T^* X$ of $Y$,
and the multiplicity of $T^*_Y X$ is one,
that is, $\mathcal{B}_{Y|X}$ is a simple holonomic system.
We denote by $\delta(z_2,z_4) \in \mathcal{B}_{Y|X}$ the class represented by $1\in \D$.
For $l \in\Z_{\ge0}$, we consider the element
$T_l:=(z_1 \z 2)^l \delta(z_2,z_4) \in \mathcal{B}_{Y|X}$.
Then 
\begin{lemma}\label{lemma:isom}
The annihilator $\I_{1l}$ of $T_l$ is the left ideal of $\D$ generated by
\begin{equation}\label{I1}
z_1 \z 1 -l, \ 
\zz 1{l+1}, \ 
z_2 \z 2 + l+1, \ 
z_2^{l+1},\ 
\z 3, \ 
z_4.
\end{equation}
\end{lemma}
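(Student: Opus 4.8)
The plan is to identify $\I_{1l}=\operatorname{Ann}(T_l)$ with the left ideal $J\subseteq\D$ generated by the six operators in \eqref{I1} by first checking the easy inclusion and then pinning down $\D/J$ as a simple holonomic module through a characteristic-variety computation.

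First I would verify $J\subseteq\I_{1l}$ by direct calculation. Writing $\delta=\delta(z_2,z_4)$, the defining relations of $\B_{Y|X}$ give $\partial_{z_1}\delta=\partial_{z_3}\delta=z_2\delta=z_4\delta=0$, and since $z_1,\partial_{z_1}$ commute with $z_2,\partial_{z_2}$ one has $T_l=(z_1\partial_{z_2})^l\delta=z_1^l\partial_{z_2}^l\delta$. Using the rule $z_2\partial_{z_2}^{a}\delta=-a\,\partial_{z_2}^{a-1}\delta$ (and that $\partial_{z_1}^{l+1}$ kills $z_1^l\cdot v$ whenever $\partial_{z_1}v=0$), each generator is seen to annihilate $T_l$: for instance $z_1\partial_{z_1}T_l=lT_l$, $\ z_2\partial_{z_2}\cdot z_1^l\partial_{z_2}^l\delta=z_1^l z_2\partial_{z_2}^{l+1}\delta=-(l+1)T_l$, and $z_2^{l+1}\partial_{z_2}^l\delta=(-1)^l l!\,z_2\delta=0$, while $\partial_{z_1}^{l+1}$, $\partial_{z_3}$, $z_4$ kill $T_l$ for the obvious reasons. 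This realizes $T_l$ as a cyclic vector and yields a $\D$-linear surjection $\phi:\D/J\twoheadrightarrow\B_{Y|X}$, $1\mapsto T_l$ (surjective because $\B_{Y|X}$ is simple and $T_l\neq0$). It remains to prove $\phi$ is injective, i.e.\ $\I_{1l}=J$.

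For this I would compute the characteristic variety of $\D/J$. In $\operatorname{gr}\D=\C[z_1,\dots,z_4,\zeta_1,\dots,\zeta_4]$ the principal symbols of the six generators are $z_1\zeta_1,\ \zeta_1^{l+1},\ z_2\zeta_2,\ z_2^{l+1},\ \zeta_3,\ z_4$, whose common zero locus is exactly $\{z_2=z_4=\zeta_1=\zeta_3=0\}=T^*_Y X$. Since $\operatorname{gr}(J)$ contains these symbols, $\operatorname{Ch}(\D/J)\subseteq T^*_Y X$; as $\D/J\neq0$ and $T^*_Y X$ is irreducible, equality holds, so $\D/J$ is holonomic. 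To bound the multiplicity I would localize at the generic point of $T^*_Y X$, where $z_1$ and $\zeta_2$ are invertible: there $z_1\zeta_1$ and $z_2\zeta_2$ already generate $\zeta_1$ and $z_2$, the powers $\zeta_1^{l+1},z_2^{l+1}$ become redundant, and the symbol ideal collapses to the reduced ideal $(\zeta_1,z_2,\zeta_3,z_4)$. Hence the multiplicity of $\D/J$ along $T^*_Y X$ is at most $1$, while the surjection onto $\B_{Y|X}$ (of multiplicity one) forces it to be at least $1$; thus the characteristic cycle equals $[T^*_Y X]$.

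Finally, additivity of the characteristic cycle on $0\to\ker\phi\to\D/J\to\B_{Y|X}\to0$ gives $\operatorname{CC}(\ker\phi)=0$, whence $\ker\phi=0$ and $\I_{1l}=J$. I expect the only delicate point to be the multiplicity comparison — that replacing $\operatorname{gr}(J)$ by the ideal of generator symbols can only raise the multiplicity — but since only the upper bound $\le 1$ is needed, this is automatic from $\operatorname{gr}(J)\supseteq(\text{symbols})$. As a more hands-on alternative one could instead reduce an arbitrary PBW monomial $z^{\mathbf i}\partial^{\mathbf k}$ modulo $J$: the relations $\partial_{z_3},z_4\in J$ eliminate $\partial_{z_3}$ and $z_4$, the pair $z_1\partial_{z_1}-l,\ \partial_{z_1}^{l+1}$ normalizes the $(z_1,\partial_{z_1})$-part, and $z_2\partial_{z_2}+l+1,\ z_2^{l+1}$ the $(z_2,\partial_{z_2})$-part; one then checks that the resulting spanning set is carried by $\phi$ bijectively (up to nonzero constants) onto the monomial basis $z_1^i z_3^j\partial_{z_2}^a\partial_{z_4}^b\delta$ of $\B_{Y|X}$, which gives injectivity directly.
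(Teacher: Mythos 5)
Your proof is correct, but it follows a genuinely different route from the paper's. The paper argues by a partial algebraic Fourier transform in the variables $(z_2,z_4)$: this algebra automorphism of $\D$ carries $\B_{Y|X}$ isomorphically onto $\D/(\D\z1+\D\z2+\D\z3+\D\z4)\simeq\mathcal{O}_X$, sends $\delta(z_2,z_4)$ to $1$ and $T_l$ to the polynomial $(-z_1z_2)^l$, so the whole lemma reduces to the elementary one-variable fact that the annihilator of $z^l$ in $\C[z,\frac{\partial}{\partial z}]$ is generated by $z\frac{\partial}{\partial z}-l$ and $\frac{\partial^{l+1}}{\partial z^{l+1}}$; pulling these generators back through the inverse transform produces exactly the list \eqref{I1} (e.g.\ $z_2\z2-l$ becomes $-(z_2\z2+l+1)$). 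You instead verify the inclusion $J\subseteq\I_{1l}$ by hand and then force equality with holonomicity machinery: the generator symbols cut out $T^*_YX$, localization at its generic point (where $z_1,\zeta_2$ are invertible) bounds the multiplicity of $\D/J$ by one, and additivity of characteristic cycles along $0\to\ker\phi\to\D/J\to\B_{Y|X}\to0$ kills the kernel. Your treatment of the one delicate point --- that $\operatorname{gr}(J)$ may be strictly larger than the ideal generated by the symbols of the generators, which only helps the upper bound --- is correct, as is the use of simplicity of $\B_{Y|X}$ to get surjectivity of $\phi$. What the paper's transform buys is brevity and a completely explicit identification $\M_{1l}\simeq\B_{Y|X}$ with no appeal to characteristic cycles; what your argument buys is independence from the existence of a convenient transform, so it adapts to settings where no such trick is available --- indeed it is close in spirit to the paper's own proof of the harder Lemma~\ref{n=3lemma}, which also concludes by a simplicity/maximality argument --- and it establishes along the way that $\D/J$ is simple holonomic of multiplicity one, which is precisely the content of Theorem~\ref{theorem 6}(i).
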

\blue{
\begin{proof}
The partial algebraic Fourier transform 
\[
z_1 \mapsto z_1, z_2 \mapsto \frac\partial{\partial z_2}, 
z_3 \mapsto z_3, z_4 \mapsto \frac\partial{\partial z_4}, 
\frac\partial{\partial z_1} \mapsto \frac\partial{\partial z_1},
\frac\partial{\partial z_2} \mapsto -z_2,
\frac\partial{\partial z_3} \mapsto \frac\partial{\partial z_3},
\frac\partial{\partial z_4} \mapsto -z_4\]
gives an algebra isomorphism from $\D$ to $\D$.
This isomorphism sends $\mathcal{B}_{Y|X}$ to
\[
\D/(\D\frac\partial{\partial z_1}
+ \D\frac\partial{\partial z_2}+ 
\D\frac\partial{\partial z_3}+ 
\D\frac\partial{\partial z_4}) \simeq \mathcal{O}_X.
\]
The image of $\delta(z_2,z_4)$ is $1 \in \mathcal{O}_X$
and the image of $T_l$ is $(-z_1 z_2)^l \in \mathcal{O}_X$.
In the one variable case,
the annihilator of $z_1^l$ in $\C[z_1,\frac\partial{\partial z_1}]$
is the left ideal of $\C[z_1,\frac\partial{\partial z_1}]$
generated by $z_1 \frac\partial{\partial z_1} -l$ and $\frac{\partial^{l+1}}{\partial z_1^{l+1}}$.
The inverse partial Fourier transform of 
$z_2 \frac\partial{\partial z_2} -l$ is
$\frac\partial{\partial z_2} (-z_2) - l = -\left(z_2 \frac\partial{\partial z_2} + l +1\right)$,
which
demonstrates the desired lemma.
\end{proof}}
Let 
$\I_3$ be the left ideal generated by
\begin{equation}
z_1 \z 1 + z_2 \z 2+1, \ 
\z 3, \ 
z_4.
\end{equation}
We define left D-modules $\M_{1l} := \D/\I_{1l}$
and $\M_3:=\D/\I_3$.

\begin{theorem}\label{theorem 6}
\begin{itemize}
\item[(i)]
We have a D-module isomorphism
$\displaystyle
\M_{1l} \ni P \mapsto P\blue{\circ} (z_1 \z 2)^l \blue{\delta(z_2,z_4)} \in \mathcal{B}_{Y|X}
$.
\blue{Especially, $\M_{1l}$ is a simple holonomic system.}
\item[(ii)]
The natural D-module homomorphism
$\displaystyle\M_3 \rightarrow \M_{1l}$
is surjective.
\item[(iii)]
\blue{
For any positive integer $L$, the natural D-module homomorphism
$\displaystyle\M_3 \rightarrow \bigoplus_{l=0}^L \M_{1l}$
is surjective.}
\end{itemize}
\end{theorem}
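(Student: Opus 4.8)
The plan is to handle the three parts in order, since (i) furnishes the concrete model of $\mathcal{B}_{Y|X}$ in which (ii) and (iii) become transparent. For (i), I would first note that Lemma~\ref{lemma:isom} already identifies $\I_{1l}$ with the annihilator of $T_l$, so $P\mapsto P\,T_l$ descends to a well-defined injective $\D$-homomorphism $\M_{1l}=\D/\I_{1l}\hookrightarrow \mathcal{B}_{Y|X}$. The only remaining point is surjectivity, i.e.\ that the cyclic submodule $\D\,T_l$ is all of $\mathcal{B}_{Y|X}$. Since $\mathcal{B}_{Y|X}$ is generated by $\delta(z_2,z_4)$, it suffices to recover $\delta(z_2,z_4)$ from $T_l=(z_1\z 2)^l\delta(z_2,z_4)=z_1^l\,\zz 2 l\,\delta(z_2,z_4)$. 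I would apply $\zz 1 l$, using $\z 1\delta(z_2,z_4)=0$, to strip off $z_1^l$ and reach $l!\,\zz 2 l\,\delta(z_2,z_4)$, then apply $z_2^l$, using $z_2\,\delta(z_2,z_4)=0$ together with $[z_2,\zz 2 m]=-m\,\zz 2{m-1}$, to land on a nonzero multiple of $\delta(z_2,z_4)$. Thus $\delta(z_2,z_4)\in\D\,T_l$, the map is an isomorphism, and simplicity and holonomicity of $\M_{1l}$ are inherited from $\mathcal{B}_{Y|X}$, which the text has already recorded as a simple holonomic system.

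For (ii), the natural map exists and is onto as soon as $\I_3\subseteq\I_{1l}$, since a map $\D/\I_3\to\D/\I_{1l}$ carrying the class of $1$ to the class of $1$ is automatically surjective. The containment is a one-line check: the Euler generator of $\I_3$ splits as $z_1\z 1+z_2\z 2+1=(z_1\z 1-l)+(z_2\z 2+l+1)$, and both summands, together with $\z 3$ and $z_4$, appear among the generators \eqref{I1} of $\I_{1l}$.

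Part (iii) is the crux. The map into the direct sum is the diagonal of the maps from (ii), well defined by the same containments $\I_3\subseteq\I_{1l}$. Writing $u_{1l}$ for the class of $1$ in $\M_{1l}$ and $u=(u_{10},\dots,u_{1L})$ for the image of the generator of $\M_3$, surjectivity amounts to $\D\,u=\bigoplus_{l=0}^L\M_{1l}$. The key observation is that, by the relation $z_1\z 1-l\in\I_{1l}$, the Euler operator acts on the $l$-th generator by a \emph{distinct} scalar, $z_1\z 1\,u_{1l}=l\,u_{1l}$. I would therefore apply the separating operator $\prod_{k\ne l}(z_1\z 1-k)$ to $u$: on the $j$-th component it multiplies $u_{1j}$ by $\prod_{k\ne l}(j-k)$, which vanishes for $j\ne l$ (the factor $k=j$ occurs) and is a nonzero scalar for $j=l$. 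This produces the vector supported only in position $l$ with entry a nonzero multiple of $u_{1l}$; since $u_{1l}$ generates the cyclic module $\M_{1l}$, the entire $l$-th summand lies in $\D\,u$. Running $l$ from $0$ to $L$ and summing yields the claim.

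The step that carries the real weight is this separation, and the subtlety worth flagging is that one cannot invoke a Chinese-remainder or distinct-simple-modules argument: by (i) all the $\M_{1l}$ are abstractly isomorphic (each to $\mathcal{B}_{Y|X}$), so $\bigoplus_l\M_{1l}$ is isotypic of multiplicity $L+1$ and has no \emph{a priori} reason to be cyclic. What rescues cyclicity is not the abstract isomorphism class but the internal data of the chosen generators: the vectors $u_{1l}$ are simultaneous eigenvectors of $z_1\z 1$ with pairwise distinct eigenvalues $l$, and it is exactly this spectral separation that makes the single vector $u$ a cyclic generator of the whole sum. The eigenvalue relation is immediate from \eqref{I1}, so once this viewpoint is adopted the argument is short; the only care needed is to confirm that $\prod_{k\ne l}(j-k)$ vanishes precisely off the diagonal.
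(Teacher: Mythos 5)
Your proposal is correct and takes essentially the same approach as the paper: (i) and (ii) rest on Lemma~\ref{lemma:isom} and the containment $\I_3\subset\I_{1l}$, and your separating operator $\prod_{k\ne l}(z_1\z 1-k)$ in (iii) is exactly the numerator of the paper's Lagrange-interpolation preimage $P=\sum_{l=0}^L P_l\prod_{k\ne l}\frac{z_1\z 1-k}{l-k}$, exploiting the same spectral fact $z_1\z 1\,u_{1l}=l\,u_{1l}$. The only cosmetic differences are that you recover each summand of $\bigoplus_{l=0}^L\M_{1l}$ one at a time rather than writing a single explicit preimage for an arbitrary tuple, and in (i) you prove surjectivity by explicitly stripping $T_l$ down to $\delta(z_2,z_4)$ where the paper implicitly relies on the simplicity of $\mathcal{B}_{Y|X}$.
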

\begin{proof}
\blue{
(i) follows from Lemma~{lemma:isom}, and
(ii) follows from the fact that $\I_3 \subset \I_{1l}$ for all $l \in \Z_{\ge 0}$.
We prove (iii).
For a given $P_l \in \D$ with $l=0,1,\ldots,L$,
we put
\[
P := \sum_{l=0}^L P_l \prod_{0\leq k \leq L, k \neq l} 
\frac{z_1 \frac{\partial}{\partial z_1}-k}{l-k} \in \D,
\]
then $P \equiv P_l \mod \I_{1l}$ with $l=0,1,\ldots,L$ follows.}
\end{proof}

We observe that
\begin{equation}\label{I3}
z_4 \z 1,\ 
z_2 \z 3,\ 
z_1 \z 1+z_2 \z 2+1,\ 
z_3 \z 3+z_4 \z 4+1 \in \I_3.
\end{equation}
We will interpret these elements as images of a Lie algebra.
Let 
\[
\rho:\gl(4)=\gl(4,\C) \ni E_{ij} \mapsto -z_j \z i \in \D
\]
be a Lie algebra homomorphism
induced from the standard representation of $GL(4,\C)$ on $X=\C^4$.
We define the character $\chi_l: \h_1=\mathfrak{a} \oplus \mathfrak{n} \to \C$
by 
\[
\chi_l(E_{11}) =-l,\ 
\chi_l(E_{22}) = l+1,\ 
\chi_l(E_{33})=0,\ 
\chi_l(E_{44})=1
\]
and $\chi_l(\mathfrak{n})=0$.
The restriction of $\chi_l$ on \blue{$\h_3$} is denoted by $\chi$
since it does not depend on $l$.
Explicitly,
\[
\chi(E_{11}+E_{22})=1,\ 
\chi(E_{33}+E_{44})=1.
\]
Then Lemma~\ref{lemma:isom} and the equation~\eqref{I3} \blue{show} that 
\begin{equation}\label{diagram}
\begin{array}{ccc}
(\rho-\chi)(\h_3) &\subset& \I_3 \\
\cap & & \cap \\
(\rho-\chi_l)(\h_1) &\subset& \I_{1l}
\end{array}
\end{equation}

\begin{remark}
The Lie algebra homomorphism $\rho$ induces the associative algebra homomorphism
\[
\tilde{\rho}: U(\gl(4)) \rightarrow 
\{P \in \D \mid [P, z_1 \z 1+z_2 \z 2+ z_3 \z 3+z_4 \z 4]=0 \}
\] from the universal enveloping algebra 
of the Lie algebra $\gl(4)$ onto the subalgebra of $\D$ of homogeneous degree zero.
Note that the map $\tilde{\rho}$ is not injective;
for example, a non-zero element $E_{14} E_{32} - E_{12} E_{34} \in U(\gl(4))$
belongs to the kernel of $\tilde{\rho}$.
Therefore, it is possible that two different Lie subalgebras of $\gl(4)$
generate two distinct left ideals of $U(\gl(4))$, whose images under $\tilde{\rho}$ may coincide.
This is one of the reasons why invariance under a smaller Lie algebra extends to that under a larger Lie algebra.
\end{remark}

\subsection{Relatively invariant distribution}
Let $\B_{X_\R}$ be sections of hyperfunctions on $X_\R$.
From the diagram \blue{\eqref{diagram}},
we conclude the following on solutions of differential equations.
\begin{theorem}
\[
\begin{array}{ccc}
\{ u \in \B_{X_\R} \mid P u = 0, \forall P \in (\rho-\chi)(\h_3) \} 
& \supset &
\{ u \in \B_{X_\R} \mid P u = 0, \forall P \in \I_3 \}
\\
\cup && \cup 
\\
\displaystyle\bigoplus_{l \in \Z_{\ge0}}
\{ u \in \B_{X_\R} \mid P u = 0, \forall P \in (\rho-\chi_l)(\h_1) \} 
& \supset &
\displaystyle\bigoplus_{l \in \Z_{\ge0}}
\{ u \in \B_{X_\R} \mid P u = 0, \forall P \in \I_{1l} \}
\end{array}
\]
\end{theorem}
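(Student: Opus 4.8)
The plan is to obtain all four relations formally from the operator and ideal inclusions recorded in the diagram~\eqref{diagram}, using the single principle that passing from a set of operators to its solution space reverses inclusions. For a subset $\mathcal{S}\subset\D$ write $\mathrm{Sol}(\mathcal{S}):=\{u\in\B_{X_\R}\mid Pu=0,\ \forall P\in\mathcal{S}\}$. If $\mathcal{S}\subset\mathcal{S}'$ then $\mathrm{Sol}(\mathcal{S})\supset\mathrm{Sol}(\mathcal{S}')$, since every equation imposed by $\mathcal{S}$ is also imposed by $\mathcal{S}'$. Thus the four solution spaces in the statement are $T:=\mathrm{Sol}((\rho-\chi)(\h_3))$, $T':=\mathrm{Sol}(\I_3)$, and, for each $l$, $S_l:=\mathrm{Sol}((\rho-\chi_l)(\h_1))$ and $S'_l:=\mathrm{Sol}(\I_{1l})$, with bottom row $\bigoplus_l S_l$ and $\bigoplus_l S'_l$.

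First I would read off the four edges of~\eqref{diagram}. The top inclusion $(\rho-\chi)(\h_3)\subset\I_3$ gives $T\supset T'$; for each fixed $l$ the inclusion $(\rho-\chi_l)(\h_1)\subset\I_{1l}$ gives $S_l\supset S'_l$, and summing over $l$ gives the bottom row $\bigoplus_l S_l\supset\bigoplus_l S'_l$. The vertical inclusions $(\rho-\chi)(\h_3)\subset(\rho-\chi_l)(\h_1)$ and $\I_3\subset\I_{1l}$ give $T\supset S_l$ and $T'\supset S'_l$ for every $l$. The essential feature is that $T$ and $T'$ do not depend on $l$, which is precisely what allows an entire direct sum over $l$ to be compared against a single space.

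The one point that is not purely formal, and which I expect to be the crux, is that the two bottom sums are genuinely direct and that they embed into the non-summed spaces $T$ and $T'$. For this I would use the Euler-type operator $z_1\z1$: both $\I_{1l}$ and $(\rho-\chi_l)(\h_1)$ contain $z_1\z1-l$, the latter because $E_{11}\in\h_1$ and $(\rho-\chi_l)(E_{11})=-(z_1\z1-l)$. Hence every element of $S_l$ (and of $S'_l$) is an eigen-hyperfunction of $z_1\z1$ with eigenvalue $l$; eigenvectors for distinct eigenvalues are linearly independent, so any finite relation $\sum_l u_l=0$ forces each $u_l=0$, and both bottom sums are direct. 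Finally, since each $S_l\subset T$ and $T$ is a vector space closed under finite sums, every finite sum $\sum_l u_l$ with $u_l\in S_l$ again lies in $T$, so $\bigoplus_l S_l\subset T$; the same argument with $S'_l\subset T'$ gives $\bigoplus_l S'_l\subset T'$. Combining the four edges assembles the diagram and completes the proof.
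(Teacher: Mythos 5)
Your proof is correct and takes essentially the same route as the paper: the paper states the theorem as an immediate consequence of the inclusion diagram~\eqref{diagram}, using exactly the contravariance principle (larger operator set, smaller solution space) that you formalize. You actually go slightly beyond the paper's (unwritten) argument by verifying that the sums over $l$ are genuinely direct via the eigenvalue argument for $z_1\z1 - l \in (\rho-\chi_l)(\h_1)\cap\I_{1l}$, a detail the paper leaves implicit in its use of the symbol $\bigoplus$.
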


Tauchi defines \cite{T1} a distribution
\begin{equation}
T_2^l = z_1^l \zz 2l \delta(z_2,\bz_2)
=(z_1 \z 2)^l \delta(z_2,\bz_2)
\end{equation}
on the real manifold $X_\R\blue{=\{(z_1,z_2,\overline{z_1},\overline{z_2}) \mid z_1, z_2 \in \C\}\simeq \R^4}$.
\blue{Here $\delta(z_2,\bz_2)$ denotes the Dirac distribution $\frac{1}{-2\sqrt{-1}}\delta(\Re z_2, \Im z_2)$ on $\R^4$.}
The support of $T_2^l$ is the closure of $O_{11\R}$, \blue{that is, $O_{11\R} \sqcup O_{00\R}$}.
\begin{proposition}
$T_2^l \in 
\{ u \in \B_{X_\R} \mid P u = 0, \forall P \in \I_{1l} \}
\blue{\simeq\Hom}_{\D}(\M_{1l}, \B_{X_\R})$.
Especially, $T_2^l$ is relatively invariant under $\h_1$ with the character $\chi_l$,
and is relatively invariant under $\h_3$ with the character $\chi$.
\end{proposition}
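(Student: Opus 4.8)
The two assertions are really one: it suffices to prove the membership
$T_2^l \in \{ u \in \B_{X_\R} \mid P u = 0,\ \forall P \in \I_{1l} \}$,
because the displayed identification with $\Hom_{\D}(\M_{1l},\B_{X_\R})$ is nothing but the standard adjunction for the cyclic module $\M_{1l}=\D/\I_{1l}$, sending a homomorphism to the image of the class of $1 \in \D$. The two relative-invariance statements then follow at once from the inclusions recorded in \eqref{diagram}: since $(\rho-\chi_l)(\h_1)\subset \I_{1l}$ annihilates $T_2^l$, we get $\rho(A)T_2^l=\chi_l(A)T_2^l$ for every $A\in\h_1$, and since $(\rho-\chi)(\h_3)\subset \I_3\subset \I_{1l}$, we get the analogous identity for $\h_3$ with the character $\chi$. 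So the plan is to verify that each of the six generators listed in \eqref{I1} annihilates $T_2^l$.

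First I would fix how $\D=\D_X$ acts on $\B_{X_\R}$ through the real structure. On $X_\R$ one has $z_3=\bz_1$ and $z_4=\bz_2$, so multiplication by $z_3,z_4$ becomes multiplication by $\bz_1,\bz_2$ and the holomorphic fields $\z 3,\z 4$ act as the Wirtinger operators $\bzz 1,\bzz 2$, while $z_1,z_2,\z 1,\z 2$ retain their literal meaning. I would then record the elementary facts $[\z 2,z_2]=1$ and $\z 2\,\bz_2=0$ (so $\z 2$ commutes with multiplication by $\bz_2$), together with the support identities $z_2\,\delta(z_2,\bz_2)=0$ and $\bz_2\,\delta(z_2,\bz_2)=0$.

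The six generators now split into a routine group and two that carry the content. Since $T_2^l=z_1^l\,\zz 2l\,\delta(z_2,\bz_2)$ depends holomorphically on $z_1$ and not at all on $\bz_1$, the operators $z_1\z 1-l$, $\zz 1{l+1}$, and $\z 3=\bzz 1$ kill it by the Euler relation, by $\zz 1{l+1}z_1^l=0$, and by antiholomorphic independence respectively; and $z_4=\bz_2$ kills it because $\z 2$ commutes past $\bz_2$ and $\bz_2\,\delta=0$. The two substantive checks, $z_2^{l+1}$ and $z_2\z 2+l+1$, both reduce to the one-variable recursion $z_2\,\zz 2k\,\delta=-k\,\zz 2{k-1}\delta$, which follows from $[\z 2,z_2]=1$ and $z_2\delta=0$; iterating it gives $z_2^{l+1}\,\zz 2l\,\delta=0$ and $(z_2\z 2+l+1)\,\zz 2l\,\delta=0$, and multiplying by $z_1^l$ yields $P\,T_2^l=0$ for these generators too.

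The work here is careful bookkeeping rather than genuine difficulty, and the one point I expect to need real attention is the translation of the complex operators $z_4$ and $\z 3$ into the real picture via $z_4=\bz_2$ and $\z 3=\bzz 1$: the whole argument for those generators hinges on $\bz_2$ commuting through $\zz 2l$ and on the support identity $\bz_2\,\delta=0$, after which the two $z_2$-recursions close out the proof.
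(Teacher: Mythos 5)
Your proof is correct. Note first that the paper states this proposition without a written proof, so there is nothing to compare line by line; what you supply is a legitimate filling-in, and your reduction of the two ``Especially'' claims to the inclusions in diagram \eqref{diagram} is exactly the intended logic. Your verification that the six generators \eqref{I1} kill $T_2^l$ is also sound: the dictionary $z_3\leftrightarrow\bz_2$-free coordinates, namely $z_3\mapsto \bz_1$, $z_4\mapsto \bz_2$, $\z3\mapsto\bzz{1}$, $\z4\mapsto\bzz{2}$, is the correct way to let $\D$ act on $\B_{X_\R}$, and the recursion $z_2\,\zz2k\,\delta(z_2,\bz_2)=-k\,\zz2{k-1}\,\delta(z_2,\bz_2)$ together with the support identities $z_2\,\delta(z_2,\bz_2)=\bz_2\,\delta(z_2,\bz_2)=0$ does close out the two substantive generators $z_2^{l+1}$ and $z_2\z2+l+1$. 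The one difference worth pointing out is that, from the paper's standpoint, the membership can be obtained with no new computation at all: since $\delta(z_2,\bz_2)$ satisfies $\z1 u=z_2u=\z3 u=z_4u=0$, the cyclic presentation of $\mathcal{B}_{Y|X}=\D/(\D\z1+\D z_2+\D\z3+\D z_4)$ yields a $\D$-module morphism $\mathcal{B}_{Y|X}\to\B_{X_\R}$ with $\delta(z_2,z_4)\mapsto\delta(z_2,\bz_2)$, hence $T_l\mapsto T_2^l$; applying this morphism to the statement $\I_{1l}T_l=0$ of Lemma~\ref{lemma:isom} gives $\I_{1l}T_2^l=0$ at once. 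Your direct check essentially re-derives, in the distributional picture, the easy inclusion of that lemma; both routes are valid, and yours has the virtue of being self-contained, while the transfer argument is the shortcut the paper's D-module setup is designed to provide.
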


Note that the relative invariance of $T_2^l$ under $\h_3$ is due to Tauchi.
We regard Theorem~\ref{theorem 6} as a D-module counterpart of this result.

\section{$n=3$}
\subsection{D-module}
In $X=\C^6$, 
we use coordinates $(z_1,z_2,z_3,z_4,z_5,z_6) \in X$.
The ring of differential operators on $X$ is denoted by $\D=\D_X=\C[z_1,z_2,z_3,z_4,z_5,z_6, \z{1},\z 2, \z 3,\z{4},\z 5, \z 6]$.
Let $Y=\{z_3=z_6=0\} \subset X=\C^6$,
and define a simple holonomic D-module $\B_{Y|X}$ by
\[
\mathcal{B}_{Y|X}
= \mathcal{H}^2_{[Y]}(\mathcal{O}_X)
= \D/(\D \z 1 + \D \z 2 + \D z_3 + \D \z 4 + \D \z 5 + \D z_6).
\]
The standard generator represented by $1 \in \D$ is denoted by
$\delta(z_3,z_6) \in \B_{Y|X}$.
We define a section of $\B_{Y|X}$.
\begin{align}
T_l 
&= (z_2\z 3+z_4 \z 5)^{l} z_2 z_5 \delta(z_3, z_6)
\\
&= z_2^l \left( z_2 z_5 \zz 3l + l z_4 \zz 3{l-1}\right)\delta(z_3, z_6)
\\
&= ( z_2 z_5  - z_3 z_4 ) (z_2\z 3)^{l}\delta(z_3, z_6).
\notag
\end{align}

\begin{lemma}\label{n=3lemma}
The annihilator ideal $\I_{1l}$ of 
$T_l$ is generated by the following elements:
\begin{eqnarray*}
&\displaystyle \z 1,\ 
z_6,\ 
z_2 \z 2+z_3 \z 3,\ 
z_2 \z 2+z_4 \z 4-1-l,\ 
z_2 \z 4+z_3 \z 5,
\\
&\displaystyle \blue{\zz 2{l+1} \z 4,\ 
z_3^l \z 4,\ 
\zz 2{l+2} \z 5,\ 
z_3^{l+1} \z 5,}
\\
&\displaystyle  \zz 42,\ 
\zz 52,\ 
\frac{\partial^2}{\partial z_4 \partial z_5},\ 
z_4 \z 4+z_5 \z 5-1.
\end{eqnarray*}
\end{lemma}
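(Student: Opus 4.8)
The plan is to mimic the proof of Lemma~\ref{lemma:isom}: apply a partial algebraic Fourier transform to replace $\B_{Y|X}$ by $\mathcal{O}_X$ and $T_l$ by an explicit polynomial, and then compute the annihilator of that polynomial. Concretely I would transform in the two variables cutting out $Y$,
\[
z_3\mapsto\z3,\ \z3\mapsto -z_3,\quad z_6\mapsto\z6,\ \z6\mapsto -z_6,
\]
fixing the remaining variables. This sends $\B_{Y|X}$ to $\D/(\D\z1+\cdots+\D\z6)\simeq\mathcal{O}_X$ and $\delta(z_3,z_6)$ to $1$. Using the closed form $T_l=(z_2z_5-z_3z_4)(z_2\z3)^l\delta(z_3,z_6)$, and noting $(z_2\z3)^l\mapsto(-z_2z_3)^l$ while $z_2z_5-z_3z_4\mapsto z_2z_5-z_4\z3$, the image of $T_l$ is, up to the scalar $(-1)^l$, the polynomial $g:=z_2^{l+1}z_3^lz_5-l\,z_2^lz_3^{l-1}z_4\in\mathcal{O}_X$. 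Transforming the thirteen listed generators turns the claim into: $\operatorname{Ann}_{\D}(g)$ is generated by
\[
\z1,\ \z6,\ z_2\z2-z_3\z3-1,\ z_2\z2+z_4\z4-1-l,\ z_2\z4+\z3\z5,
\]
\[
\zz2{l+1}\z4,\ \zz3l\z4,\ \zz2{l+2}\z5,\ \zz3{l+1}\z5,\ \zz42,\ \zz52,\ \frac{\partial^2}{\partial z_4\,\partial z_5},\ z_4\z4+z_5\z5-1.
\]

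The inclusion of this ideal $\mathcal{J}$ in $\operatorname{Ann}(g)$ is the routine direction. Each generator kills $g$ by direct computation: $g$ is independent of $z_1,z_6$; it is a simultaneous eigenvector of the Euler operators $z_2\z2-z_3\z3$, $z_2\z2+z_4\z4$, $z_4\z4+z_5\z5$ with eigenvalues $1,\,l+1,\,1$; it has total degree one in $\{z_4,z_5\}$, so the three second-order operators in $z_4,z_5$ annihilate it; the four operators ending in $\z4$ or $\z5$ vanish because $\z4 g$ and $\z5 g$ are monomials of low $z_2,z_3$-degree; and $z_2\z4+\z3\z5$ kills $g$ because $z_2\,\z4 g=-\z3\,\z5 g$, the single linear syzygy among the two coefficients of $g$.

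The crux is the reverse inclusion $\operatorname{Ann}(g)\subseteq\mathcal{J}$. Since $\z1,\z6\in\mathcal{J}$ and $g$ involves only $z_2,z_3,z_4,z_5$, it suffices to work in the four-variable Weyl algebra $\D'=\C[z_2,z_3,z_4,z_5,\z2,\z3,\z4,\z5]$ and show $\mathcal{J}'=\operatorname{Ann}_{\D'}(g)$, where $\mathcal{J}'$ is generated by the eleven operators other than $\z1,\z6$. First I would note $\zz2{l+1}\zz3l\z5\,g$ is a nonzero constant, so $\D'g$ is all of $\C[z_2,z_3,z_4,z_5]=\mathcal{O}'$, a simple holonomic $\D'$-module; hence $\D'/\mathcal{J}'\to\D'g$, $P\mapsto Pg$, is a surjection onto a simple module, and I only need injectivity. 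For that I would build a normal form: the relations $\zz42,\zz52,\frac{\partial^2}{\partial z_4\partial z_5}$ force the $\{z_4,z_5\}$-derivative part of a representative into $\{1,\z4,\z5\}$; the relations $\zz2{l+1}\z4,\zz3l\z4,\zz2{l+2}\z5,\zz3{l+1}\z5$ then bound the $z_2,z_3$-derivative orders in the $\z4$- and $\z5$-strata; and the three Euler operators together with $z_2\z4+\z3\z5$ remove the remaining redundancy, leaving a family indexed by $\Z_{\ge0}^4$. One then checks, with respect to a fixed monomial order, that this family maps under $P\mapsto Pg$ to the monomial basis $\{z_2^iz_3^jz_4^mz_5^n\}$ of $\mathcal{O}'$ in a unitriangular fashion, giving linear independence and hence the isomorphism.

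The main obstacle is exactly this last step. Because $g$ is a two-term rather than a monomial polynomial, the annihilator does not split variable by variable as it did in Lemma~\ref{lemma:isom}, and the two monomials of $g$ can cancel under $P\mapsto Pg$ for different $P$; thus the naive spanning set $\{z_2^iz_3^jz_4^mz_5^n\cdot g\}$ is not visibly independent, and the normal form must be chosen carefully to break these coincidences. A related warning is that the principal symbols of the listed generators cut out strictly more than the zero section of $T^*X$, so the characteristic-variety shortcut is unavailable: the equality $\mathcal{J}'=\operatorname{Ann}(g)$ genuinely uses the interaction of the relations — in particular the linear syzygy encoded by $z_2\z4+\z3\z5$ — and not merely their symbols. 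An alternative to the explicit normal form would be a graded-dimension comparison of $\D'/\mathcal{J}'$ with $\mathcal{O}'$, but this appears to require the same S-polynomial bookkeeping in disguise.
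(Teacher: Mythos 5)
Your reduction via the partial Fourier transform in $(z_3,z_6)$ is sound, your formula for $g$ and for the transformed generators is correct, and the inclusion $\mathcal{J}\subseteq\operatorname{Ann}_{\D}(g)$ is verified completely. You also correctly recognize that simplicity of $\D'g=\mathcal{O}'$ by itself proves nothing (maximality of $\operatorname{Ann}_{\D'}(g)$ points in the wrong direction), so that everything hinges on injectivity of $\D'/\mathcal{J}'\to\mathcal{O}'$, $P\mapsto Pg$. That injectivity, however, is exactly where the proposal stops: the normal form ``indexed by $\Z_{\ge0}^4$'' is announced but never constructed, the spanning property (a confluence/Gr\"obner statement for a noncommutative rewriting system) is not proved, and the unitriangularity is not checked for any monomial order; you yourself flag this as the main obstacle. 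Since this reverse inclusion is the entire content of the lemma, the proof is incomplete. Concretely, what must be excluded is an extra composition factor of $\D'/\mathcal{J}'$ lying in the kernel of $P\mapsto Pg$, and, as your own remark about principal symbols shows, nothing you have written rules one out: the symbols of all eleven generators vanish on the conormal variety $\{z_2=z_3=0,\ \zeta_4=\zeta_5=0\}$, so a ghost factor of the shape $\B_{\{z_2=z_3=0\}|\C^4}$ is entirely consistent with your relations at the symbol level.

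For comparison, the paper avoids this combinatorics by a structural length argument. It sets aside the single generator $s=z_2\z 4+z_3\z 5$, writes $\I''$ for the ideal generated by the remaining twelve, and proves that $P\mapsto(Pz_4,Pz_5)$ induces a D-module isomorphism $\D/\I''\simeq\M'_l\oplus\M'_{l+1}$, where $\I'_l$ is an explicit monomial-type ideal whose quotient $\M'_l$ is simple holonomic for the same easy reason as in Lemma~\ref{lemma:isom}. Hence $\D/\I''$ has length two, the further quotient $\D/(\I''+\D s)$ has length at most one (simple or zero), and since $\I''+\D s\subseteq\operatorname{Ann}(T_l)\subsetneq\D$, maximality forces $\I''+\D s=\operatorname{Ann}(T_l)$ --- no normal forms and no S-polynomials anywhere. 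If you wish to finish along your lines, the cleanest repair is to import exactly this trick to the Fourier side: drop $z_2\z 4+\z 3\z 5$ from $\mathcal{J}'$, show that $P\mapsto(Pz_4,Pz_5)$ identifies the resulting quotient with the direct sum of the annihilator modules of the monomials $z_2^lz_3^{l-1}$ and $z_2^{l+1}z_3^l$ (each handled exactly as in Lemma~\ref{lemma:isom}), and conclude by the same length/maximality argument; carrying out your normal-form plan as stated would require substantially more bookkeeping than the proposal contains, and its feasibility is not demonstrated.
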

\begin{proof}
We define a left ideal $\I'_{l}$ of $\D$ 
generated by
\begin{eqnarray*}
\displaystyle 
\z 1, \ 
\z 4, \ 
\z 5, \ 
z_6,\  
z_2 \z 2-\blue{l},\ 
\zz 2{l+\blue{1}},\ 
z_3 \z 3+l,\ 
z_3^l. 
\end{eqnarray*}
\blue{Using a similar argument to Theorem~\ref{theorem 6}(i)},
we see that $\M'_{l} := \D/\I'_{l}$ forms a simple holonomic system.
Let $\I''$ be the left ideal generated by elements in the statement of Lemma
other than $z_2 \z 4+z_3 \z 5$.
\blue{
By computing the image of generators of left ideals, we obtain
the property:
\begin{align}
\label{contain1}
\I'' z_4 \subset \I'_l &\mbox{ and } \I'' z_5 \subset \I'_{l+1},
\\
\I'_l \z 4 \subset \I''  &\mbox{ and }
\I'_{l+1} \z 5 \subset \I''.
\label{contain2}
\end{align}}
For example, $(z_2 \z 2-l) \z 4 = \z 4(z_2 \z 2+ z_4 \z 4-l-1)- z_4 \zz 42 \in \I''$.
We \blue{will} see that the map $\D \blue{\ni} P  \mapsto  (P z_4, P z_5) \in \D^{\oplus 2}$
induces a D-module \blue{isomorphism}
\[
\blue{\phi}:\D/\I'' \ni P \mapsto (P z_4, P z_5) \in \M'_{\blue{l}} \oplus \M'_{\blue{l+1}}.
\]
\blue{Actually, $\phi$ is well-defined from the property \eqref{contain1}.
For a given $P_1, P_2 \in D$,
put $P:= P_1 \frac{\partial}{\partial z_4} + P_2 \frac{\partial}{\partial z_4} \in \D$,
then $P z_4 - P_1 \in \I'_l$ and $P z_5 - P_2 \in \I'_{l+1}$.
This proves the surjectivity of $\phi$.
For the injectivity of $\phi$,
take a $P \in \D$ such that $(P \mod \I'')$ is in the kernel of $\phi$.
We may write 
$P=Q_1 \frac{\partial}{\partial z_4} + Q_2 \frac{\partial}{\partial z_5} + Q_3$
with $Q_i \in \C[z_1,z_2,z_3,z_4,z_5,z_6,\frac{\partial}{\partial z_1},\frac{\partial}{\partial z_2},
\frac{\partial}{\partial z_3},\frac{\partial}{\partial z_6}]$.
Then by assumption,
$0 \equiv P z_4 \equiv Q_1+Q_3 z_4 \mod \I'_l$ and
$0 \equiv P z_5 \equiv Q_2+Q_3 z_5 \mod \I'_{l+1}$.
There exist $Q_4 \in \I'_l, Q_5 \in \I'_{l+1}$ such that
$Q_1 = - Q_3 z_4 + Q_4$,
$Q_2 = -Q_3 z_5 + Q_5$.
Then $P = -Q_3 (z_4 \z 4 + z_5 \z 5 -1) 
+ Q_4 \frac{\partial}{\partial z_4} + Q_5 \frac{\partial}{\partial z_5} \in \I''$
by the property~\eqref{contain2}.
}

Note that the image of \blue{the map $\phi$} is the sum of two simple holonomic systems.
Therefore, $\D/(\I''+ D (z_2 \z 4+z_3 \z 5))$ is a simple holonomic system \blue{or zero}.
It is easy to see that the annihilator contains $\I''+ D (z_2 \z 4+z_3 \z 5)$;
then, maximality implies equality.
\end{proof}
We define a complex Lie subalgebra $\h_1 \subset \gl(6,\C)$ by
\[
\h_1 = \bigoplus_{j=1}^6 \C E_{1j} \oplus \bigoplus_{i=2}^6 \C E_{i6} \oplus
\C (E_{22}+E_{33}) \oplus \C(E_{44} + E_{55}) \oplus \C(E_{22}+ E_{44}) \oplus \C(E_{42} + E_{53}).
\]
and its character $\chi_l: \h_1 \rightarrow \C$
by
\begin{align*}
&\chi_l(E_{22}+E_{33})=0,\ 
\chi_l(E_{22}+E_{44})=-1-l,\ 
\chi_l(E_{44}+E_{55})=-1,\\
&\blue{\chi_l(E_{1j})=\chi_l(E_{i6})=\chi_l(E_{42}+E_{53})=0}.
\end{align*}
Then 
in terms of a Lie algebra homomorphism
\[
\rho: \gl(6,\C) \ni E_{ij} \mapsto - z_j \z i \in \D,
\]
Lemma~\ref{n=3lemma} \blue{implies} $(\rho-\chi_l)(\h_1) \subset \I_{1l}$.

We define a Lie subalgebra $\h_3\subset \h_1$ by
\[
\h_3 = \C(E_{11}+E_{22}+E_{33}) \oplus
\C(E_{44} + E_{55} + E_{66}) \oplus
\C(E_{42} + E_{53}) \oplus
\C(E_{15} + E_{26}) \oplus
\C E_{13} \oplus \C E_{46}.
\]
We denote by $\chi$ the restriction of $\chi_l$ on $\h_3$
since the restriction does not depend on $l$.
We define the left ideal $\I_3 \subset \D$ generated by
\blue{$(\rho-\chi)(\h_3)$, that is,}
\[
z_1 \z 1+z_2 \z 2+z_3 \z 3,
z_4 \z 4+z_5 \z 5 + z_6 \z 6 \blue{-1},
z_5 \z 1+z_6 \z 2,
z_2 \z 4+z_3 \z 5,
z_3 \z 1, z_6 \z 4.
\]
Then $(\rho-\chi)(\h_3) \subset (\rho-\chi_l)(\h_{1l})$ for all $l \in \Z_{\ge0}$,
and $(\rho-\chi)(\h_3) \subset \I_{3} \subset \I_{1l}$.
We have the same diagram \eqref{diagram}.

Now we move on to solutions of these D-modules.
We choose a real form $X_\R=\{(z_1,z_2,z_3, \overline{z_1}, \overline{z_2},\overline{z_3}) \mid (z_1,z_2,z_3) \in \C^3\}$ of $X=\C^6$.
Denote by $\B_{X_\R}$ the set of hyperfunctions on $X_\R$.
\begin{lemma}
Tauchi's distribution
$T_0^l = (\bz_{1} \bzz{2} + z_{2} \z{3})^l |z_{2}|^{2} \delta(z_3,\bz_3) \in \B_{X_\R}$ is a solution of $\D/\I_{1l}$.
\end{lemma}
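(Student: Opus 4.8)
The plan is to identify Tauchi's distribution $T_0^l$ with the image of the algebraic section $T_l \in \B_{Y|X}$ under a $\D$-linear map $\B_{Y|X} \to \B_{X_\R}$, and then transport the annihilation statement of Lemma~\ref{n=3lemma} across this map. First I would fix the dictionary governing the $\D$-action on the real form $X_\R = \{(z_1,z_2,z_3,\bz_1,\bz_2,\bz_3)\}$: the variables and derivatives with indices $4,5,6$ act as $z_{j+3} = \bz_j$ and $\z{j+3} = \bzz{j}$ for $j=1,2,3$. Under this dictionary the holomorphic section
\[
T_l = (z_2 \z 3 + z_4 \z 5)^l \, z_2 z_5 \, \delta(z_3,z_6)
\]
becomes, after the substitutions $z_4 \mapsto \bz_1$, $\z 5 \mapsto \bzz 2$, $z_5 \mapsto \bz_2$, $z_6 \mapsto \bz_3$ (and $z_2 \bz_2 = |z_2|^2$), exactly
\[
(\bz_1 \bzz 2 + z_2 \z 3)^l \, |z_2|^2 \, \delta(z_3,\bz_3) = T_0^l .
\]

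The central step is to check that $\delta(z_3,\bz_3)$, as an element of $\B_{X_\R}$, satisfies the defining relations of the generator $\delta(z_3,z_6)$ of $\B_{Y|X}$, namely that it is annihilated by $\z 1, \z 2, z_3, \z 4, \z 5, z_6$. Via the dictionary this is immediate: $\delta(z_3,\bz_3)$ depends on neither $z_1,z_2$ nor $\bz_1,\bz_2$, so the four derivative generators kill it, while it is supported on $\{z_3=0\}$, so multiplication by $z_3$ or by $z_6 = \bz_3$ kills it as well. Hence there is a well-defined $\D$-module homomorphism $\Phi : \B_{Y|X} \to \B_{X_\R}$ with $\Phi(\delta(z_3,z_6)) = \delta(z_3,\bz_3)$, and the computation of the previous paragraph reads $\Phi(T_l) = T_0^l$.

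Finally I would conclude by $\D$-linearity. By Lemma~\ref{n=3lemma} every $P \in \I_{1l}$ annihilates $T_l$ in $\B_{Y|X}$, so $P\, T_0^l = P\, \Phi(T_l) = \Phi(P\, T_l) = 0$; thus $T_0^l$ lies in $\{u \in \B_{X_\R} \mid Pu = 0,\ \forall P \in \I_{1l}\} \simeq \Hom_\D(\M_{1l}, \B_{X_\R})$, which is the assertion. The only genuinely delicate point is the legitimacy of the realization underlying $\Phi$: one must know that the substitution $\z{j+3} = \bzz{j}$ really is the $\D$-module structure carried by hyperfunctions on the totally real submanifold $X_\R \subset X$, so that vanishing in the algebraic module $\B_{Y|X}$ transfers to vanishing of the honest distribution $T_0^l$. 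Granting this standard identification, the more intricate generators of $\I_{1l}$ — such as $\zz 2{l+1}\z 4$ and $z_3^l \z 4$ — need no separate treatment, being absorbed uniformly into $\Phi$.
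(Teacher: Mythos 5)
Your proof is correct and is essentially the argument the paper intends: the paper in fact states this lemma \emph{without} proof, treating it as the immediate consequence of Lemma~\ref{n=3lemma} under the standard identification of $\B_{X_\R}$ as a $\D$-module via $z_{j+3}\mapsto \bz_j$, $\z{j+3}\mapsto \bzz{j}$ (the same dictionary the paper uses in the $n=2$ case for $T_2^l$ and $\delta(z_2,\bz_2)$). Your construction of the $\D$-linear map $\Phi:\B_{Y|X}\to\B_{X_\R}$ with $\Phi(\delta(z_3,z_6))=\delta(z_3,\bz_3)$, the verification that $\Phi(T_l)=T_0^l$, and the transport of the annihilator $\I_{1l}$ along $\Phi$ make precise exactly what the paper leaves implicit, so your write-up fills the omitted step correctly.
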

This shows $T_0^l$ is \blue{relatively} invariant under
\begin{align*}
\h_{3\R}
&=\h_3 \cap \mathfrak{g}_\R \\
&= \blue{\{ 
a(E_{11}+E_{22}+E_{33}) +
\overline{a} (E_{44} + E_{55} + E_{66}) +
b(E_{42} + E_{53}) +
\overline{b} (E_{15} + E_{26}) +
c E_{13} + \overline{c}  E_{46} \mid a,b,c \in \C \}}
\end{align*}
\blue{with the character $\chi$}.
Orbit decomposition is similar to $n=2$, so we omit its description.

\bigskip
\blue{
We would like to express our sincere gratitude to the referee for their valuable comments and suggestions, which have greatly improved the quality and clarity of this paper.
}


\noindent
Hiroyuki Ochiai\\
Institute of Mathematics for Industry, Kyushu University\\
744 Motooka Fukuoka 819-0395 Japan\\
ochiai@imi.kyushu-u.ac.jp

\end{document}